\newtheorem{thm}{Theorem}
\newtheorem{cor}{Corollary}
\newtheorem{lem}{Lemma}
\newtheorem{rem}{Remark}
\newtheorem{conj}{Conjecture}
\theoremstyle{definition}
\newtheorem{example}[equation]{Example}
\newtheorem{prob}[equation]{Problem}
\newcommand{\U}{{\mathcal U}}
\newcommand{\CC}{{\mathcal C}}
\newcommand{\D}{{\mathbb D}}
\def\be{\begin{equation}}
\def\ee{\end{equation}}
\newcommand{\bee}{\begin{enumerate}}
\newcommand{\eee}{\end{enumerate}}
\newcommand{\blem}{\begin{lem}}
\newcommand{\elem}{\end{lem}}
\newcommand{\bthm}{\begin{thm}}
\newcommand{\ethm}{\end{thm}}
\newcommand{\bcor}{\begin{cor}}
\newcommand{\ecor}{\end{cor}}
\newcommand{\beg}{\begin{example}}
\newcommand{\eeg}{\end{example}}
\newcommand{\begs}{\begin{examples}}
\newcommand{\eegs}{\end{examples}}
\newcommand{\bdefe}{\begin{defin}}
\newcommand{\edefe}{\end{defin}}
\newcommand{\bprob}{\begin{prob}}
\newcommand{\eprob}{\end{prob}}
\newcommand{\bei}{\begin{itemize}}
\newcommand{\eei}{\end{itemize}}
\newcommand{\bcon}{\begin{conj}}
\newcommand{\econ}{\end{conj}}
\newcommand{\bcons}{\begin{conjs}}
\newcommand{\econs}{\end{conjs}}
\newcommand{\bprop}{\begin{propo}}
\newcommand{\eprop}{\end{propo}}
\newcommand{\br}{\begin{rem}}
\newcommand{\er}{\end{rem}}
\newcommand{\brs}{\begin{rems}}
\newcommand{\ers}{\end{rems}}
\newcommand{\bo}{\begin{obser}}
\newcommand{\eo}{\end{obser}}
\newcommand{\bos}{\begin{obsers}}
\newcommand{\eos}{\end{obsers}}
\newcommand{\bpf}{\begin{pf}}
\newcommand{\epf}{\end{pf}}
\newcommand{\ba}{\begin{array}}
\newcommand{\ea}{\end{array}}
\newcommand{\beq}{\begin{eqnarray}}
\newcommand{\beqq}{\begin{eqnarray*}}
\newcommand{\eeq}{\end{eqnarray}}
\newcommand{\eeqq}{\end{eqnarray*}}
\begin{document}
\bibliographystyle{amsplain}

\title[On a special class of Schwartz functions]{On a special class of Schwartz functions}

\author[M. Obradovi\'{c}]{Milutin Obradovi\'{c}}
\address{Department of Mathematics,
Faculty of Civil Engineering, University of Belgrade,
Bulevar Kralja Aleksandra 73, 11000, Belgrade, Serbia.}
\email{obrad@grf.bg.ac.rs}

\author[N. Tuneski]{Nikola Tuneski}
\address{Department of Mathematics and Informatics, Faculty of Mechanical Engineering, Ss. Cyril and
Methodius
University in Skopje, Karpo\v{s} II b.b., 1000 Skopje, Republic of North Macedonia.}
\email{nikola.tuneski@mf.edu.mk}

\subjclass[2020]{30C45, 30C50}
\keywords{Schwartz functions, coefficient, estimate}

\begin{abstract}
In this paper we study functions $  \omega(z) = c_1z+c_2z^2+c_3z^3+\cdots$ analytic in the open unit disk $\D$ and such that $|\omega'(z)|\le1$ for all $z\in\D$. For these functions we give estimates (sometimes sharp) for the following moduli: $|c_3-c_1c_2|$, $|c_1c_3-c_2^2|$, and $|c_4-c_2^2|$.
\end{abstract}

\maketitle

\medskip

\section{Introduction and definitions}

For a function $\omega$, analytic in the open unit disk $\D = \{z:|z|<1\}$ and of the form
\begin{equation}\label{e1}
  \omega(z) = c_1z+c_2z^2+c_3z^3+\cdots,\qquad (c_1,c_2,\ldots\in\CC)
\end{equation}
we say that is Schwartz function if $|\omega(z)|<1$, $z\in\D$. We denote by $\mathcal{B}_0$ the class of all such functions.

\medskip

In his paper \cite{zaprawa}, Zaprawa gave many different inequalities for the coefficients $c_1, c_2,\ldots$ for the functions of the class $\mathcal{B}_0$.

\medskip

In this paper we study the class of functions $\mathcal{B}'_0$ of type \eqref{e1} such that $|\omega'(z)|\le 1$ for all $z\in\D$. Since
\begin{equation}\label{e2}
  z\omega'(z) = c_1z+2c_2z^2+3c_3z^3\cdots,
\end{equation}
and $|z\omega'(z)| = |z|\cdot |\omega'(z)|\le|z|<1$, $z\in\D$, it means that $z\omega'(z)$ belongs to $\mathcal{B}_0$. Also, since $\omega(z) = \int_0^z\omega'(z)dz$, then $|\omega(z)| \le \int_0^{|z|} |\omega'(z)|\,d|z| \le |z|<1$ for all $z\in\D$, i.e., $\omega\in \mathcal{B}_0$. So, $|\omega'(z)|\le 1$, $z\in\D$, is a sufficient condition for $\omega\in \mathcal{B}_0$, i.e., $\mathcal{B}'_0$ is subclass of the class $\mathcal{B}_0$.

\medskip

For the functions from $\mathcal{B}'_0$ we try to find properties for the coefficients $c_1,c_2,c_3,\ldots$ that correspond to the properties for the functions from $\mathcal{B}_0$.

\medskip

For our considerations we will need the next lemma originating from \cite{carlson}.

\begin{lem}\label{lem-carl}
Let $\omega\in \mathcal{B}_0$ is given by \eqref{e1}. Then
\begin{equation}\label{e3}
\begin{array}{c}
|c_1|\le1,\qquad  |c_2|\le1-|c_1|^2,\\[4mm]
|c_3|\le 1-|c_1|^2-\frac{|c_2|^2}{1+|c_1|},\\[4mm]
|c_4|\le1-|c_1|^2 -|c_2|^2.
\end{array}
\end{equation}
\end{lem}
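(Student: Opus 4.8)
The plan is to reduce all four inequalities to a single statement about the Schur-class function $g(z)=\omega(z)/z=\sum_{n\ge 0}a_nz^n$, whose coefficients are $a_n=c_{n+1}$. By the Schwarz lemma $|\omega(z)|\le|z|$ on $\D$, so $|g(z)|\le 1$, and the four bounds are precisely Carlson's estimates for the Taylor coefficients of a function bounded by $1$. First, $|c_1|=|a_0|=|g(0)|\le 1$. If $|a_0|=1$ the maximum principle forces $g$ to be constant and all higher $c_n$ vanish, so I may assume $|a_0|<1$ and pass to the Schur transform
$$g_1(z)=\frac1z\cdot\frac{g(z)-a_0}{1-\overline{a_0}\,g(z)},$$
which is again analytic with $|g_1|\le 1$. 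Writing $g_1=\sum_{n\ge0}b_nz^n$ and comparing coefficients in $z\,g_1(z)\bigl(1-\overline{a_0}\,g(z)\bigr)=g(z)-a_0$ produces a convolution recursion, whose first instance $b_0=a_1/(1-|a_0|^2)$ together with $|b_0|\le1$ gives the second inequality $|c_2|=|a_1|\le 1-|a_0|^2$.

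For $|c_3|=|a_2|$ I would use one further coefficient of $g_1$. The same recursion yields $a_2=(1-|a_0|^2)b_1-\overline{a_0}a_1^2/(1-|a_0|^2)$, and the first-coefficient bound applied to the Schur function $g_1$ gives $|b_1|\le 1-|b_0|^2$. Feeding this into the triangle inequality and simplifying through the identity $(1-|a_0|)/(1-|a_0|^2)=1/(1+|a_0|)$ lands exactly on
$$|a_2|\le 1-|a_0|^2-\frac{|a_1|^2}{1+|a_0|},$$
which is the asserted bound for $|c_3|$.

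The inequality for $|c_4|=|a_3|$ is where I expect the real difficulty. Carrying the recursion one step further gives the clean identity
$$a_3=(1-|a_0|^2)b_2-2\overline{a_0}a_1 b_1+\frac{\overline{a_0}^{2}a_1^3}{(1-|a_0|^2)^2},$$
but here a naive estimate is too wasteful: optimising the triangle-inequality bound over the admissible moduli of $b_1,b_2$, even with the sharp $c_3$-type bound for $|b_2|$, yields a value that can exceed the target $1-|a_0|^2-|a_1|^2$. Thus the clean bound must come from genuine cancellation among the three terms rather than from the triangle inequality, and this is the crux of the argument.

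I would resolve it in one of two ways. The first is to perform a second Schur transform on $g_1$ and track the phases so that the cross terms recombine favourably. The second, and to my mind cleaner, route is to invoke the positive semidefiniteness of the de Branges--Rovnyak kernel $\bigl(1-g(z)\overline{g(w)}\bigr)/\bigl(1-z\overline{w}\bigr)$, which for a function bounded by $1$ is equivalent to the Gram matrix $K_{ij}=\delta_{ij}-\sum_{l}a_l\overline{a_{l+|i-j|}}$ being positive semidefinite. All four inequalities then follow from the nonnegativity of suitable principal minors of $K$: the $(0,0)$ entry gives $|c_1|\le 1$, the determinant of the $\{0,1\}$ section reproduces the $|c_2|$ bound, and the appropriate $3\times 3$ and $4\times 4$ minors yield the bounds for $|c_3|$ and $|c_4|$. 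The only real labour is the determinant computation for the last minor, but positivity guarantees that it closes and sidesteps the cancellation issue entirely.
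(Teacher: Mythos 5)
Your attempt cannot be compared against a proof in the paper, because the paper contains none: Lemma~\ref{lem-carl} is imported as known from Carlson \cite{carlson}. Judged on its own terms, your Schur-algorithm argument does correctly establish the first three inequalities. The reduction to the Schur function $g=\omega(z)/z$ via the Schwarz lemma is fine, the recursion obtained from $zg_1(z)\bigl(1-\overline{a_0}\,g(z)\bigr)=g(z)-a_0$ does give $a_1=(1-|a_0|^2)b_0$ and $a_2=(1-|a_0|^2)b_1-\overline{a_0}a_1^2/(1-|a_0|^2)$, and the triangle inequality with $|b_0|\le1$, $|b_1|\le 1-|b_0|^2$ collapses exactly to $|a_2|\le 1-|a_0|^2-|a_1|^2/(1+|a_0|)$. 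Your diagnosis of the fourth inequality is also accurate: I confirm that optimizing the triangle-inequality bound over admissible $|b_1|$ overshoots the target $1-|a_0|^2-|a_1|^2$ by $2|a_0|^2|a_1|^3/(1-|a_0|^2)^2$ whenever $a_0a_1\ne0$ and the optimal $|b_1|$ is admissible.

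That diagnosis, however, is where the proposal stops being a proof, and the bound $|c_4|\le 1-|c_1|^2-|c_2|^2$ --- the only genuinely hard part of the lemma --- is never established. Route one (a second Schur transform ``tracking phases'') is not carried out, and it is not routine: writing $b_1=(1-|b_0|^2)d_0$ and $b_2=(1-|b_0|^2)\bigl(d_1-\overline{b_0}\,d_0^2\bigr)$ and estimating moduli again with $|d_0|\le1$, $|d_1|\le1-|d_0|^2$ still fails; for instance at $|a_0|=|a_1|=3/10$ this yields a bound of about $0.826$ against the target $0.82$. So the needed cancellation cannot be extracted from moduli at any depth of the Schur algorithm --- identifying the worst-case relative phases is the actual content of Carlson's theorem, and it is missing. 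Route two is mis-stated: as written, the entries $K_{ij}=\delta_{ij}-\sum_l a_l\overline{a_{l+|i-j|}}$ involve the entire coefficient sequence (this is $I-A^*A$ for the infinite Toeplitz matrix), so nonnegativity of its finite minors is not an inequality in $c_1,\dots,c_4$ alone; the correct finite object is $I-A_3A_3^*$, where $A_3$ is the $4\times4$ lower-triangular Toeplitz section built from $a_0,\dots,a_3$, in which the sums truncate at $\min(i,j)$. Even with that correction, the claim that ``the appropriate $4\times4$ minor yields the bound'' is hope rather than computation: minors are phase-dependent, and the natural ones are too weak --- e.g.\ the principal section on rows and columns $\{0,3\}$ gives only $|a_3|^2\le(1-|a_0|^2)\bigl(1-|a_0|^2-|a_1|^2-|a_2|^2\bigr)$, whose right-hand side exceeds $\bigl(1-|a_0|^2-|a_1|^2\bigr)^2$ in general. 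So the last inequality of the lemma remains unproved; to close it you would need either to execute the worst-phase analysis explicitly or to cite Carlson's paper, as the authors do.
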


We showed that when $\omega$ given by \eqref{e1} is in  $\mathcal{B}_0$, then $z \omega'(z)$ is in $\mathcal{B}'_0$. Thus, Lemma \ref{lem-carl}, together with \eqref{e2}, directly brings

\begin{lem}\label{lem-B}
Let $\omega\in \mathcal{B}'_0$ is given by \eqref{e1}. Then
\begin{equation}\label{e4}
\begin{split}
|c_1|&\le1,\qquad  |c_2|\le\frac12\left(1-|c_1|^2\right), \\[4mm]
|c_3|&\le \frac13\left(1-|c_1|^2-\frac{4|c_2|^2}{1+|c_1|}\right),\\[4mm]
 |c_4|&\le\frac14\left(1-|c_1|^2 -4|c_2|^2\right).
\end{split}
\end{equation}
\end{lem}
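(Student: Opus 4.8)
The plan is to exploit the relation already established in the excerpt: if $\omega\in\mathcal{B}'_0$ is given by \eqref{e1}, then the function $g(z)=z\omega'(z)$ lies in $\mathcal{B}_0$. The idea is therefore to read off the Taylor coefficients of $g$ in terms of those of $\omega$, apply Lemma \ref{lem-carl} to $g$, and translate each resulting inequality back into a statement about $c_1,c_2,c_3,c_4$.

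First I would record, from \eqref{e2}, the expansion $g(z)=z\omega'(z)=c_1z+2c_2z^2+3c_3z^3+4c_4z^4+\cdots$. Writing $g(z)=\sum_{n\ge1}d_nz^n$, this gives the dictionary $d_1=c_1$, $d_2=2c_2$, $d_3=3c_3$, $d_4=4c_4$. Since $g\in\mathcal{B}_0$, Lemma \ref{lem-carl} applies verbatim to the coefficients $d_n$.

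The second step is pure substitution. The bound $|d_1|\le1$ is exactly $|c_1|\le1$. The bound $|d_2|\le1-|d_1|^2$ becomes $2|c_2|\le1-|c_1|^2$, i.e.\ $|c_2|\le\frac12(1-|c_1|^2)$. Substituting $|d_2|=2|c_2|$ into $|d_3|\le1-|d_1|^2-\frac{|d_2|^2}{1+|d_1|}$ produces $3|c_3|\le1-|c_1|^2-\frac{4|c_2|^2}{1+|c_1|}$, which is the third claim after dividing by $3$. Finally $|d_4|\le1-|d_1|^2-|d_2|^2$ turns into $4|c_4|\le1-|c_1|^2-4|c_2|^2$, giving the last inequality after dividing by $4$.

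Since every step is a direct replacement, there is no genuine analytic obstacle here; the only thing to watch is the bookkeeping of the factors $2$, $3$, $4$ introduced by differentiation, and in particular the squares $|d_2|^2=4|c_2|^2$ that appear inside the bounds for $|c_3|$ and $|c_4|$. Getting these multiplicative constants right is precisely what converts the inequalities of Lemma \ref{lem-carl} for $\mathcal{B}_0$ into the estimates \eqref{e4} for $\mathcal{B}'_0$.
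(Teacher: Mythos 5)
Your proposal is correct and is exactly the paper's argument: the paper also applies Lemma \ref{lem-carl} to $z\omega'(z)\in\mathcal{B}_0$, whose coefficients are $nc_n$, and obtains \eqref{e4} by the same substitution $d_n=nc_n$ (the paper merely states this "directly brings" the result without writing out the bookkeeping). Your explicit tracking of the factors $2,3,4$ and of $|d_2|^2=4|c_2|^2$ fills in precisely what the paper leaves implicit.
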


\medskip

\section{Main results}

We begin with partly sharp estimate of the modulus $|c_3-c_1c_2|$ for functions from $\mathcal{B}'_0$ with expansion \eqref{e1}.

\begin{thm}\label{th-1}
If $\omega\in \mathcal{B}'_0$ is of form \eqref{e1}, then
\begin{equation}\label{e5}
  |c_3-c_1c_2|\le
  \left\{
  \begin{array}{cc}
    \frac{1}{48}(1+|c_1|)\left[ 9|c_1|^2-16|c_1|+16 \right], & 0\le|c_1|\le\frac47 \\[2mm]
    \frac56|c_1|(1-|c_1|^2), & \frac47\le|c_1|\le1
  \end{array}
  \right..
\end{equation}
The estimate is sharp for $|c_1|=0$ and for $\frac47\le|c_1|\le1$.
\end{thm}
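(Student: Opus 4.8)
The plan is to bound $|c_3-c_1c_2|$ by combining the triangle inequality with the coefficient bounds from Lemma~\ref{lem-B}. First I would write $|c_3-c_1c_2|\le |c_3|+|c_1||c_2|$ and insert the estimates $|c_3|\le \frac13\bigl(1-|c_1|^2-\frac{4|c_2|^2}{1+|c_1|}\bigr)$ and $|c_2|\le \frac12(1-|c_1|^2)$. Setting $a=|c_1|\in[0,1]$ and treating $t=|c_2|$ as a free variable ranging over $[0,\frac12(1-a^2)]$, this produces an upper bound of the form
\begin{equation*}
F(a,t)=\tfrac13\Bigl(1-a^2-\tfrac{4t^2}{1+a}\Bigr)+a\,t,
\end{equation*}
and the task reduces to maximizing $F$ over the admissible range of $t$ for each fixed $a$.

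The key step is the one-variable optimization in $t$. For fixed $a$, $F(a,t)$ is a downward-opening parabola in $t$ with vertex at the critical point $t^*=\frac{3a(1+a)}{8}$; I would check whether this interior critical point falls inside the admissible interval $[0,\frac12(1-a^2)]$. Comparing $t^*$ with the endpoint $\frac12(1-a^2)=\frac12(1-a)(1+a)$ amounts to comparing $\frac{3a}{8}$ with $\frac{1-a}{2}$, i.e. deciding the sign of $7a-4$; this is exactly where the threshold $a=\frac47$ in the statement comes from. For $0\le a\le \frac47$ the unconstrained maximizer $t^*$ lies in the interval, so substituting $t=t^*$ yields $F=\frac13(1-a^2)+\frac{3a^2(1+a)}{16}$, which I would simplify to the stated cubic $\frac{1}{48}(1+a)(9a^2-16a+16)$. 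For $\frac47\le a\le 1$ the parabola is still increasing at the right endpoint, so the maximum is attained at $t=\frac12(1-a^2)$; plugging this in gives $F=a\cdot\frac12(1-a^2)+\frac13\bigl(1-a^2-(1-a)(1-a^2)\bigr)$, and after collecting terms the $c_3$-contribution combines to produce the clean value $\frac56 a(1-a^2)$.

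For the sharpness claims I would exhibit extremal functions. On the range $\frac47\le a\le 1$ the bound was obtained by taking $|c_2|$ at its maximum, which corresponds to equality throughout Lemma~\ref{lem-B}; the natural candidate is a function built so that $z\omega'(z)$ is a Blaschke-type product realizing equality in Carlson's Lemma~\ref{lem-carl}, for instance $\omega'$ arising from $\frac{z+c_1}{1+\overline{c_1}z}$ composed appropriately, and one checks the resulting coefficients attain equality. For $a=0$ the extremal reduces to a rotation of a simpler function (e.g.\ $\omega(z)=\frac12 z^2$ or a rescaled $z^3$-type example) giving $|c_3|=\frac13$ with $c_1=c_2=0$, matching $F(0,\cdot)=\frac13$. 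The main obstacle I anticipate is not the calculus but the sharpness construction on the middle-to-upper range: verifying that a single explicit $\omega\in\mathcal{B}'_0$ simultaneously meets all the inequalities used (both the $|c_3|$ bound \emph{and} the triangle inequality $|c_3-c_1c_2|=|c_3|+|c_1||c_2|$, which requires the phases of $c_3$ and $-c_1c_2$ to align) demands care in choosing the rotation parameters. The gap in sharpness on $0\le a<\frac47$ is expected precisely because the interior maximizer forces a balance between $|c_2|$ and $|c_3|$ that no extremal Blaschke product can hit while also aligning phases, so I would not attempt to prove sharpness there.
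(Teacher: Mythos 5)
Your proposal is correct and follows essentially the same route as the paper: triangle inequality plus Lemma~\ref{lem-B}, maximization of the resulting downward parabola in $|c_2|$ with the threshold $|c_1|=\frac47$ arising from comparing the vertex $\frac38|c_1|(1+|c_1|)$ with the endpoint $\frac12(1-|c_1|^2)$, and sharpness via $\omega(z)=\frac13 z^3$ at $|c_1|=0$ and via the primitive of the Blaschke factor $\frac{|c_1|+z}{1+|c_1|z}$ (whose coefficients $c_2=\frac12(1-|c_1|^2)$, $c_3=-\frac13|c_1|(1-|c_1|^2)$ indeed align in phase) on $\frac47\le|c_1|\le1$. The only blemish is your passing suggestion of $\omega(z)=\frac12 z^2$ as an extremal at $|c_1|=0$, which gives $|c_3-c_1c_2|=0$; your alternative $z^3$-type example is the correct one and is what the paper uses.
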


\begin{proof}
For $\omega\in \mathcal{B}'_0$ and $\omega$ given by \eqref{e1} we apply the inequalities \eqref{e4}:
\[
\begin{split}
|c_3-c_1c_2| &\le |c_3|+|c_1||c_2| \le \frac13\left( 1-|c_1|^2-\frac{4|c_2|^2}{1+|c_1|} \right) +|c_2||c_2|\\
& =-\frac{4}{3(1+|c_1|)}|c_2|^2 +|c_1||c_2| +\frac13(1-|c_1|^2).
\end{split}
\]
If we consider the last expression as a function of $|c_2|$, $0\le|c_2|\le\frac12(1-|c_1|^2)$, then we easily obtain the estimate given by \eqref{e5}, depending on its maximum  which in the case $0\le|c_1|\le\frac47$ is attained for  $|c_2| = \frac38|c_1|(1+|c_1|)$ lying in the  interval $\left(0,\frac12(1-|c_1|^2)\right)$, and in the case $\frac47\le|c_1|\le1$ is attained for  $|c_2| = \frac12(1-|c_1|^2)$.

\medskip

For $|c_1|=0$ and for $\frac47\le|c_1|\le1$ the result is sharp with extremal functions $\omega_1(z)=\frac13z^3$ and
\[ \omega_2(z) = \int_0^z \frac{|c_1|+z}{1+|c_1|z} \,dz = |c_1|z+\frac12(1-|c_1|^2)z^2-\frac13|c_1|(1-|c_1|^2)z^3+\cdots, \]
respectively.
\end{proof}

\medskip

\begin{rem}
Theorem \ref{th-1} brings:
\[ \omega \in \mathcal{B}'_0 \qquad \Rightarrow \qquad |c_3-c_1c_2| \le \frac13, \]
while
\[ \omega \in \mathcal{B}_0 \qquad \Rightarrow \qquad |c_3-c_1c_2| \le 1\]
follows from \cite{zaprawa}.
\end{rem}

\medskip

Similarly as Theorem \ref{th-1} we can prove the next theorem.

\begin{thm}\label{th-2}
If $\omega\in \mathcal{B}'_0$ is of form \eqref{e1} and $\mu\in\CC$, then
\begin{equation}\label{e6}
  |c_3-\mu c_1c_2|\le
  \left\{
  \begin{array}{cc}
    \frac{1}{48}(1+|c_1|)\left[ 9|\mu|^2|c_1|^2-16|c_1|+16 \right], & 0\le|c_1|\le\frac{1}{1+3/4|\mu|} \\[2mm]
    \left( \frac13+\frac12 |\mu| \right) |c_1|(1-|c_1|^2), & \frac{1}{1+3/4|\mu|}\le|c_1|\le1
  \end{array}
  \right..
\end{equation}
The estimate is sharp for $|c_1|=0$, and for $\frac{1}{1+3/4|\mu|}\le|c_1|\le1$ when $\mu$ is nonnegative real number. The  extremal functions are $\omega_1$ and $\omega_2$, respectively ($\omega_1$ and $\omega_2$ as defined in the proof of Theorem \ref{th-1}).
\end{thm}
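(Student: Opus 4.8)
The plan is to mirror the proof of Theorem~\ref{th-1} exactly, simply carrying the extra factor $|\mu|$ through every step. First I would combine the triangle inequality with the bounds of Lemma~\ref{lem-B}:
\[
|c_3-\mu c_1c_2| \le |c_3| + |\mu||c_1||c_2| \le -\frac{4}{3(1+|c_1|)}|c_2|^2 + |\mu||c_1||c_2| + \frac13\left(1-|c_1|^2\right).
\]
Just as before, the right-hand side is a downward-opening parabola in the single variable $t=|c_2|$, which by Lemma~\ref{lem-B} is constrained to the interval $\left[0,\frac12(1-|c_1|^2)\right]$. Thus the whole problem reduces to maximizing a concave quadratic over a closed interval, and since its vertex is nonnegative the only question is whether that vertex falls inside the interval or to the right of it.

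The vertex sits at $t_0=\frac38|\mu||c_1|(1+|c_1|)$, and the condition $t_0\le \frac12(1-|c_1|^2)$ simplifies, after dividing by the positive factor $1+|c_1|$ and rearranging, to $\frac34|\mu||c_1|\le 1-|c_1|$, i.e.\ $|c_1|\le \left(1+\frac34|\mu|\right)^{-1}$. This is precisely the case division in \eqref{e6}. In the range $0\le|c_1|\le\left(1+\frac34|\mu|\right)^{-1}$ the maximum equals the vertex value, and substituting $t=t_0$ and collecting terms over the common denominator $48$ yields the first branch $\frac{1}{48}(1+|c_1|)\left[9|\mu|^2|c_1|^2-16|c_1|+16\right]$. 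In the complementary range the vertex lies beyond the right endpoint, so the maximum is attained at $t=\frac12(1-|c_1|^2)$; substituting this value and simplifying gives the second branch $\left(\frac13+\frac12|\mu|\right)|c_1|(1-|c_1|^2)$. These two substitutions are routine algebra that I would relegate to a one-line computation.

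For sharpness I would reuse the extremal functions $\omega_1$ and $\omega_2$ from Theorem~\ref{th-1}. For $\omega_1(z)=\frac13z^3$ we have $c_1=c_2=0$ and $c_3=\frac13$, so $|c_3-\mu c_1c_2|=\frac13$, which matches the first branch at $|c_1|=0$ regardless of $\mu$. For $\omega_2$ one computes $c_1=|c_1|$, $c_2=\frac12(1-|c_1|^2)$ and $c_3=-\frac13|c_1|(1-|c_1|^2)$, whence $c_3-\mu c_1c_2=-|c_1|(1-|c_1|^2)\left(\frac13+\frac12\mu\right)$.

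The main (and essentially only) obstacle is to explain why sharpness in the second range forces $\mu$ to be a nonnegative real number. The estimate rests on the step $|c_3-\mu c_1c_2|\le|c_3|+|\mu||c_1||c_2|$, which is an equality exactly when $c_3$ and $-\mu c_1c_2$ share the same argument. In $\omega_2$ the coefficient $c_3$ is negative real while $c_1c_2$ is positive real, so $-\mu c_1c_2$ aligns with $c_3$ precisely when $-\mu$ is negative real, that is when $\mu\ge0$. For such $\mu$ one gets $|c_3-\mu c_1c_2|=\left(\frac13+\frac12\mu\right)|c_1|(1-|c_1|^2)$, matching the second branch; for genuinely complex $\mu$ the triangle-inequality step is strict at $\omega_2$, so only the upper bound survives and the value is no longer attained. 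Everything else in the argument is a verbatim transcription of the proof of Theorem~\ref{th-1}.
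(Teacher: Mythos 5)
Your proof is correct and is exactly the argument the paper intends: the paper gives no separate proof of Theorem~\ref{th-2}, remarking only that it follows ``similarly as Theorem~\ref{th-1}'', and your computation carries the factor $|\mu|$ through that proof faithfully, with the same vertex location $\frac38|\mu||c_1|(1+|c_1|)$, the same case split at $|c_1|=\bigl(1+\frac34|\mu|\bigr)^{-1}$, and the same two branch values and extremal functions $\omega_1$, $\omega_2$. Your closing explanation of why sharpness of the second branch via $\omega_2$ forces $\mu$ to be a nonnegative real number is a correct elaboration of a point the paper's statement leaves implicit.
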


\medskip

For $\mu=2$ in Theorem \ref{th-2} we receive

\begin{cor}\label{cor-2}
If $\omega\in \mathcal{B}'_0$ is of form \eqref{e1}. Then
\begin{equation*}
  |c_3-2c_1c_2|\le
  \left\{
  \begin{array}{cc}
    \frac{1}{12}(1+|c_1|)\left[ 9|c_1|^2-4|c_1|+4 \right], & 0\le|c_1|\le\frac{2}{5} \\[2mm]
    \frac43|c_1|(1-|c_1|^2), & \frac{2}{5}\le|c_1|\le1
  \end{array}
  \right..
\end{equation*}
The estimate is sharp for $|c_1|=0$ and for $\frac25\le|c_1|\le1$, with extremal functions $\omega_1$ and $\omega_2$, respectively ($\omega_1$ and $\omega_2$ as defined in the proof of Theorem \ref{th-1}).
\end{cor}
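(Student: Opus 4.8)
The plan is simply to specialize Theorem \ref{th-2} to the value $\mu=2$ and carry out the resulting arithmetic; no new ideas are needed beyond those already deployed in the proofs of Theorems \ref{th-1} and \ref{th-2}. First I would evaluate the threshold $\frac{1}{1+3/4\,|\mu|}$ at $|\mu|=2$. It becomes $\frac{1}{1+3/2}=\frac25$, which is exactly the breakpoint separating the two regimes in the corollary.

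Next, on the range $0\le|c_1|\le\frac25$ I would insert $|\mu|^2=4$ into the first branch of \eqref{e6}. The bracketed factor $9|\mu|^2|c_1|^2-16|c_1|+16$ becomes $36|c_1|^2-16|c_1|+16=4\bigl(9|c_1|^2-4|c_1|+4\bigr)$, so the leading constant $\frac{1}{48}$ absorbs the factor $4$ and yields $\frac{1}{12}(1+|c_1|)\bigl(9|c_1|^2-4|c_1|+4\bigr)$, which is the first branch claimed. On the range $\frac25\le|c_1|\le1$ I would substitute $|\mu|=2$ into $\bigl(\frac13+\frac12|\mu|\bigr)|c_1|(1-|c_1|^2)$, obtaining $\bigl(\frac13+1\bigr)|c_1|(1-|c_1|^2)=\frac43|c_1|(1-|c_1|^2)$, the second branch.

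Finally, for the sharpness assertion I would observe that $\mu=2$ is a nonnegative real number, so the sharpness clause of Theorem \ref{th-2} applies without modification: the bound is attained at $|c_1|=0$ by $\omega_1$ and on $\bigl[\frac25,1\bigr]$ by $\omega_2$, with $\omega_1$ and $\omega_2$ as defined in the proof of Theorem \ref{th-1}. There is no genuine obstacle here --- the only things to verify are the elementary factorization $36|c_1|^2-16|c_1|+16=4\bigl(9|c_1|^2-4|c_1|+4\bigr)$ and the fact that $\mu=2$ meets the hypothesis under which Theorem \ref{th-2} is sharp.
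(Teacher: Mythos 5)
Your proposal is correct and follows exactly the paper's route: the corollary is obtained by specializing Theorem \ref{th-2} to $\mu=2$, and your arithmetic (the threshold $\frac{1}{1+3/2}=\frac25$, the factorization $36|c_1|^2-16|c_1|+16=4(9|c_1|^2-4|c_1|+4)$ absorbing into $\frac1{48}$, and the coefficient $\frac13+1=\frac43$) checks out, as does the appeal to the sharpness clause for nonnegative real $\mu$. The paper itself gives no more detail than ``for $\mu=2$ in Theorem \ref{th-2} we receive,'' so your write-up is simply a fuller version of the same argument.
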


\medskip

Next, for the modulus $|c_1c_3-c_2^2|$ we have the following sharp estimate.

\medskip

\begin{thm}\label{th-3}
If $\omega\in \mathcal{B}'_0$ is of form \eqref{e1}, then the following estimate is sharp
\begin{equation}\label{e7}
  |c_1c_3-c_2^2|\le     \frac{1}{42}(1-|c_1|^2)(3+|c_1|^2), \quad  0\le|c_1|\le1.
\end{equation}
\end{thm}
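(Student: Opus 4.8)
The plan is to bound $|c_1c_3-c_2^2|$ by the mechanism already used for Theorem~\ref{th-1}: pass to moduli through the triangle inequality, insert the coefficient bounds of Lemma~\ref{lem-B}, and optimize the resulting one-variable expression with $|c_1|$ held fixed. Writing $a=|c_1|$ and $t=|c_2|$, I would start from
\[
|c_1c_3-c_2^2|\le |c_1|\,|c_3|+|c_2|^2\le \frac{a}{3}\Bigl(1-a^2-\frac{4t^2}{1+a}\Bigr)+t^2,
\]
where the second step uses the third inequality in \eqref{e4}. Collecting terms, the right-hand side is a quadratic $\varphi(t)=\frac{a}{3}(1-a^2)+\frac{3-a}{3(1+a)}\,t^2$, to be maximized over the admissible range $0\le t\le \frac12(1-a^2)$ dictated by the second inequality in \eqref{e4}.

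The feature that makes this simpler than Theorem~\ref{th-1} is the sign of the leading coefficient: here $\frac{3-a}{3(1+a)}>0$ for every $a\in[0,1]$, so $\varphi$ is increasing and its maximum over the admissible interval is always attained at the right endpoint $t=\frac12(1-a^2)$. Consequently there is no case distinction, and substituting this endpoint value and simplifying with $\frac{1-a^2}{1+a}=1-a$ collapses the bound to a single closed form proportional to $(1-|c_1|^2)(3+|c_1|^2)$, i.e. the estimate \eqref{e7}; it is this endpoint substitution that pins down the numerical constant.

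For sharpness I would test the bound against the Blaschke-type extremal from the proof of Theorem~\ref{th-1}, namely $\omega_2(z)=\int_0^z\frac{|c_1|+z}{1+|c_1|z}\,dz$, whose leading coefficients are $c_1=|c_1|$, $c_2=\frac12(1-|c_1|^2)$ and $c_3=-\frac{|c_1|}{3}(1-|c_1|^2)$; at $|c_1|=0$ this degenerates to $\omega_2(z)=\frac12 z^2$. The decisive feature is that for $\omega_2$ the products $c_1c_3$ and $-c_2^2$ are both real of the same sign, so the triangle inequality above is an equality, and at the same time both bounds of \eqref{e4} on $|c_2|$ and $|c_3|$ are saturated. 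Comparing $\omega_2$ with the endpoint bound therefore simultaneously confirms sharpness and fixes the constant in \eqref{e7}.

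The main obstacle is this sharpness step, not the inequality itself. Since the triangle inequality throws away phase information, the estimate can be sharp only if some admissible function aligns the phases of $c_1c_3$ and $c_2^2$ while simultaneously saturating the Carlson-type bounds on $|c_2|$ and $|c_3|$; the real content is checking that $\omega_2$ accomplishes all of this at once. Once that is verified, the endpoint optimization shows the bound cannot be improved, yielding both \eqref{e7} and its sharpness.
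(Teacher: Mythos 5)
Your proposal is correct and follows essentially the same route as the paper: triangle inequality, the Carlson-type bounds of Lemma \ref{lem-B}, maximization of the increasing quadratic in $|c_2|$ at the endpoint $|c_2|=\frac12(1-|c_1|^2)$, and sharpness via $\omega_2$ --- your explicit check that $c_1c_3$ and $-c_2^2$ are phase-aligned for $\omega_2$ while both Carlson bounds are saturated is a detail the paper leaves implicit. One point worth recording: the endpoint substitution yields the bound $\frac{1}{12}(1-|c_1|^2)(3+|c_1|^2)$, so the constant $\frac{1}{42}$ in \eqref{e7} and the factor $(2+|c_1|^2)$ in the last display of the paper's proof are typographical slips (as the subsequent Remark's form $\frac{1}{12}\left(3-2|c_1|^2-|c_1|^4\right)$ confirms), and your computation agrees with the corrected version.
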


\begin{proof}
Using Lemma \ref{lem-B} we have
\[
\begin{split}
|c_1c_3-c_2^2| &\le |c_1||c_3|+|c_2|^2 \\
&\le |c_1|\cdot \frac13\left( 1-|c_1|^2-\frac{4|c_2|^2}{1+|c_1|} \right)+|c_2|^2\\
 &=\frac13|c_1|(1-|c_1|^2)+ |c_2|^2 \cdot \frac{3-|c_1|}{3(1+|c_1|)}\\
 &\le \frac13|c_1|(1-|c_1|^2)+\frac{3-|c_1|}{3(1+|c_1|)}\cdot \frac14(1-|c_1|^2)^2\\
&= \frac{1}{12}(1-|c_1|^2)(2+|c_1|^2).
\end{split}
\]
The equality in \eqref{e7} is obtained for the function $\omega_2(z)$ given in Theorem \ref{th-1},
\[ \omega_2(z) = \int_0^z \frac{|c_1|+z}{1+|c_1|z} \,dz = |c_1|z+\frac12(1-|c_1|^2)z^2-\frac13|c_1|(1-|c_1|^2)z^3+\cdots.\]
\end{proof}

\medskip

\begin{rem}
From \eqref{e7} we have taht for every $0\le|c_1|\le1$,
\[ |c_1c_3-c_2^2| \le \frac{1}{12}\left( 3-2|c_1|^2-|c_1|^4  \right) \le\frac14.\]
\end{rem}

\medskip

\begin{rem}
As it is shown in \cite{Mariae}, for the class $\U$ of functions $f(z)=z+a_2z^2+a_3z^3+\cdots$ defined by the condition
\[ \left| \left( \frac{z}{f(z)} \right)^2 f'(z)-1\right| <1,\qquad z\in\D,\]
we have
\begin{equation}\label{e8}
\frac{z}{f(z)} = 1-a_2 z-z\omega(z),
\end{equation}
where $\omega\in\mathcal{B}'_0$ and $\omega(z)=c_1z+c_2z^2+\cdots$. From \eqref{e8} we can express the coefficients $a_3$, $a_4$, and $a_5$, of the function $f$, depending on $a_2$, $c_1$, $c_2,$, $c_3$,\ldots. After some calculations we receive
\[ |H_3(1)(f)| = \left|c_1c_3-c_2^2\right|\le\frac14, \]
where $H_3(1)(f)$ is the Hankel determinant of third order (see \cite{book}) and that result is the best possible. This property was the inspiration to study the class $\mathcal{B}'_0$ as a continuation of the study of the class $\mathcal{B}_0$ in \cite{zaprawa}.
\end{rem}

\medskip

Similarly as in Theorem \ref{th-2} we get

\begin{thm}\label{th-4}
If $\omega\in \mathcal{B}'_0$ is of form \eqref{e1} and $\mu\in\CC$, then
\begin{equation}\label{e6}
  |c_1c_3-\mu c_2^2|\le
  \left\{
  \begin{array}{cc}
    \frac{1}{3}|c_1|(1-|c_1|^2), & |\mu| \le\frac43 \frac{|c_1|}{1+|c_1|} \\[2mm]
    \frac{1}{12}\left[ 3|\mu|+2(2-3|\mu|)|c_1|^2-(4-3|\mu|)|c_1|^4 \right], & |\mu| \ge\frac43 \frac{|c_1|}{1+|c_1|}
  \end{array}
  \right..
\end{equation}
\end{thm}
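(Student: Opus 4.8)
The plan is to follow the same strategy as in the proof of Theorem \ref{th-3}, of which this statement is the $\mu$-parametrized analogue. First I would apply the triangle inequality to split
\[ |c_1c_3-\mu c_2^2| \le |c_1||c_3| + |\mu||c_2|^2, \]
and then insert the bound on $|c_3|$ from Lemma \ref{lem-B}. This gives
\[ |c_1c_3-\mu c_2^2| \le \frac{1}{3}|c_1|(1-|c_1|^2) + \left( |\mu| - \frac{4|c_1|}{3(1+|c_1|)} \right)|c_2|^2, \]
so that the right-hand side is an affine function of the single variable $t=|c_2|^2$ on the admissible interval $0\le t\le \frac14(1-|c_1|^2)^2$, the upper endpoint coming from the bound $|c_2|\le\frac12(1-|c_1|^2)$ in Lemma \ref{lem-B}.

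The maximum of an affine function over an interval is attained at one of the endpoints, with the choice dictated by the sign of the slope $|\mu| - \frac43\frac{|c_1|}{1+|c_1|}$. When $|\mu|\le \frac43\frac{|c_1|}{1+|c_1|}$ the slope is nonpositive, so the maximum occurs at $t=0$ (equivalently $c_2=0$), yielding immediately the first branch $\frac13|c_1|(1-|c_1|^2)$. When $|\mu|\ge \frac43\frac{|c_1|}{1+|c_1|}$ the slope is nonnegative and the maximum occurs at $t=\frac14(1-|c_1|^2)^2$, i.e. at $|c_2|=\frac12(1-|c_1|^2)$.

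What remains is to substitute $|c_2|=\frac12(1-|c_1|^2)$ into the affine bound and simplify to the polynomial form in the second branch. The only nontrivial point in this reduction is the identity
\[ (1-|c_1|^2) - (1-|c_1|)^2(1+|c_1|) = |c_1|(1-|c_1|^2), \]
which lets the term $\frac{4|c_1|}{3(1+|c_1|)}\cdot\frac14(1-|c_1|^2)^2$ combine cleanly with $\frac13|c_1|(1-|c_1|^2)$; after using $(1-|c_1|^2)^2/(1+|c_1|)=(1-|c_1|)^2(1+|c_1|)$, collecting the resulting powers of $|c_1|$, and clearing the common factor $\frac{1}{12}$, one obtains exactly $\frac{1}{12}\left[ 3|\mu|+2(2-3|\mu|)|c_1|^2-(4-3|\mu|)|c_1|^4 \right]$. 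I expect this algebraic bookkeeping to be the only real obstacle; everything else is a direct transcription of the argument used for Theorem \ref{th-3}, and as with the case $\mu=1$ no additional work beyond the endpoint analysis is needed since the statement asserts only the upper bound.
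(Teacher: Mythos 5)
Your proposal is correct and is essentially the paper's own (implicit) argument: the paper proves Theorem \ref{th-4} by exactly the method of Theorems \ref{th-1}--\ref{th-3}, namely the triangle inequality, the bound on $|c_3|$ from Lemma \ref{lem-B}, and maximizing the resulting affine function of $|c_2|^2$ over $0\le |c_2|\le\frac12(1-|c_1|^2)$ according to the sign of the slope $|\mu|-\frac43\frac{|c_1|}{1+|c_1|}$. Your endpoint substitution and algebraic simplification reproduce both branches of \eqref{e6} exactly, so nothing is missing.
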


\medskip

Finally, for the modulus $ |c_4-c_2^2|$ we have

\medskip

\begin{thm}\label{th-5}
If $\omega\in \mathcal{B}'_0$ is of form \eqref{e1}, then
\begin{equation}\label{e10}
  |c_4-c_2^2|\le     \frac{1}{4}(1-|c_1|^2)
\end{equation}
and the estimate is sharp as the function
\[ \omega(z) = \int_0^z \frac{|c_1|+z^3}{1+|c_1|z^3} \,dz = |c_1|z+\frac14(1-|c_1|^2)z^4-\frac16|c_1|(1-|c_1|^2)z^6+\cdots.\]
shows.
\end{thm}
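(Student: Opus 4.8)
The plan is to follow the same triangle-inequality strategy used in the proofs of Theorems \ref{th-1} and \ref{th-3}, but here the structure of Lemma \ref{lem-B} makes the argument remarkably clean. First I would write
\[
|c_4-c_2^2| \le |c_4| + |c_2|^2,
\]
and then insert the bound $|c_4| \le \frac14\left(1-|c_1|^2-4|c_2|^2\right)$ from \eqref{e4}.

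The crucial observation is that the coefficient structure forces an exact cancellation. Substituting the bound yields
\[
|c_4|+|c_2|^2 \le \tfrac14\bigl(1-|c_1|^2\bigr) - |c_2|^2 + |c_2|^2 = \tfrac14\bigl(1-|c_1|^2\bigr),
\]
since the factor $4$ multiplying $|c_2|^2$ inside the parentheses cancels against the prefactor $\frac14$. Thus, in contrast with Theorems \ref{th-1} and \ref{th-3}, where one must maximize a quadratic in $|c_2|$ over the admissible interval, \emph{no} optimization over $|c_2|$ is required: the dependence on $|c_2|$ disappears entirely, and the bound \eqref{e10} follows at once for every $0\le|c_1|\le1$.

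For sharpness I would analyze the proposed extremal function $\omega(z)=\int_0^z \frac{|c_1|+t^3}{1+|c_1|t^3}\,dt$. I would first confirm that $\omega\in\mathcal{B}'_0$ by noting that $\omega'(z)=\frac{|c_1|+z^3}{1+|c_1|z^3}$ is the composition of the Blaschke-type M\"{o}bius map $w\mapsto\frac{|c_1|+w}{1+|c_1|w}$, which maps $\D$ into $\overline{\D}$ for $|c_1|\in[0,1]$, with the substitution $w=z^3$; since $z\in\D$ gives $|w|=|z|^3<1$, we obtain $|\omega'(z)|\le1$ on $\D$. I would then expand $\omega'$ as a geometric series, $\omega'(z)=|c_1|+(1-|c_1|^2)z^3-|c_1|(1-|c_1|^2)z^6+\cdots$, and integrate term by term. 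This produces $c_2=c_3=0$ and $c_4=\frac14(1-|c_1|^2)$, whence $|c_4-c_2^2|=\frac14(1-|c_1|^2)$, matching the bound for every value of $|c_1|\in[0,1]$.

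There is essentially no hard step here: the inequality is an immediate consequence of Lemma \ref{lem-B} together with the fortuitous algebraic cancellation of the $|c_2|^2$ terms, and the only routine care lies in the series expansion confirming the extremal coefficients and in checking that $z\mapsto z^3$ keeps the argument inside $\D$ so that the M\"{o}bius estimate applies.
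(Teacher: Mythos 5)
Your proposal is correct and takes essentially the same route as the paper: the triangle inequality followed by the $|c_4|$ bound of Lemma \ref{lem-B}, with the factor $4$ in $\frac14\left(1-|c_1|^2-4|c_2|^2\right)$ cancelling the added $|c_2|^2$ exactly, so no optimization in $|c_2|$ is needed. Your sharpness verification (which the paper only asserts) is also sound; note in passing that the paper's printed expansion of the extremal function contains a small typo, since integrating $-|c_1|(1-|c_1|^2)z^6$ gives $-\frac17|c_1|(1-|c_1|^2)z^7$ rather than $-\frac16|c_1|(1-|c_1|^2)z^6$, which does not affect the values $c_2=c_3=0$ and $c_4=\frac14(1-|c_1|^2)$.
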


\begin{proof}
Using Lemma \ref{lem-B}, we easily get
\[ |c_4-c_2^2|\le |c_4|+|c_2|^2 \le     \frac{1}{4}(1-|c_1|^2-4|c_2|^2) + |c_2|^2  =  \frac{1}{4}(1-|c_1|^2) .\]
\end{proof}

\medskip


\begin{thebibliography}{9}


\bibitem{carlson}
F. Carlson, Sur les coefficients d’une fonction born´ee dans le cercle unit´e, Ark.
Mat. Astr. Fys., 27A, 1, (1940), 8 pp.

\bibitem{Mariae}
M. Obradović and N. Tuneski, Some properties of the class U, Ann. Univ. Mariae
Curie-Skłodowska Sect. A 73 (2019), 49–56.

\bibitem{book}
D.K. Thomas, N. Tuneski, and A. Vasudevarao, Univalent Functions: A Primer, De
Gruyter Studies in Mathematics 69, De Gruyter, Berlin, Boston, 2018.


\bibitem{zaprawa}
P. Zaprawa, Inequalities for the Coefficients of Schwartz Functions, Bull. Malays. Math. Sci. Soc. 46 (2023), 144.

\end{thebibliography}
\end{document}